\DeclareMathOperator{\Id}{Id}
\DeclareMathOperator{\BId}{\mathbf{Id}}
\DeclareMathOperator{\Con}{Con}
\DeclareMathOperator{\BCon}{\mathbf{Con}}
\DeclareMathOperator{\Ker}{Ker}
\DeclareMathOperator{\BKer}{\mathbf{Ker}}
\DeclareMathOperator{\T}{\Theta}
\DeclareMathOperator{\PO}{\Pi_1}
\DeclareMathOperator{\PT}{\Pi_2}
\newtheorem{theorem}{Theorem}[section]
\newtheorem{definition}[theorem]{Definition}
\newtheorem{proposition}[theorem]{Proposition}
\newtheorem{example}[theorem]{Example}
\newtheorem{corollary}[theorem]{Corollary}
\title{Directly decomposable ideals and congruence kernels of commutative semirings}
\author{Ivan Chajda, G\"unther Eigenthaler and Helmut L\"anger}
\date{}
\begin{document}
\footnotetext[1]{Support of the research by \"OAD, project CZ~02/2019, and support of the research of the first author by IGA, project P\v rF~2019~015, is gratefully acknowledged.}
\maketitle
\begin{abstract}
As pointed out in the monographs \cite G and \cite{KS} on semirings, ideals play an important role despite the fact that they need not be congruence kernels as in the case of rings. Hence, having two commutative semirings $\mathbf S_1$ and $\mathbf S_2$, one can ask whether an ideal $I$ of their direct product $\mathbf S=\mathbf S_1\times\mathbf S_2$ can be expressed in the form $I_1\times I_2$ where $I_j$ is an ideal of $\mathbf S_j$ for $j=1,2$. Of course, the converse is elementary, namely if $I_j$ is an ideal of $\mathbf S_j$ for $j=1,2$ then $I_1\times I_2$ is an ideal of $\mathbf S_1\times\mathbf S_2$. Having a congruence $\Theta$ on a commutative semiring $\mathbf S$, its $0$-class is an ideal of $\mathbf S$, but not every ideal is of this form. Hence, the lattice $\BId\mathbf S$ of all ideals of $\mathbf S$ and the lattice $\BKer\mathbf S$ of all congruence kernels (i.e.\ $0$-classes of congruences) of $\mathbf S$ need not be equal. Furthermore, we show that the mapping $\Theta\mapsto[0]\Theta$ need not be a homomorphism from $\BCon\mathbf S$ onto $\BKer\mathbf S$. Moreover, the question arises when a congruence kernel of the direct product $\mathbf S_1\times\mathbf S_2$ of two commutative semirings can be expressed as a direct product of the corresponding kernels on the factors. In the paper we present necessary and sufficient conditions for such direct decompositions both for ideals and for congruence kernels of commutative semirings. We also provide sufficient conditions for varieties of commutative semirings to have directly decomposable kernels.
\end{abstract}
 
{\bf AMS Subject Classification:} 16Y60, 08A05, 08B10, 08A30

{\bf Keywords:} Semiring, congruence, ideal, skew ideal, congruence kernel, direct decomposability

\section{Introduction}

Semirings play an important role both in algebra and applications. There exist two different versions of this concept, namely semirings having a unit element (\cite G) and those without such an element (\cite{CL} and \cite{KS}). Since the second version is mostly used in applications, we define the basic concept of this paper as follows:

\begin{definition}
{\rm(}see {\rm\cite{KS})} A {\em commutative semiring} is an algebra $\mathbf S=(S,+,\cdot,0)$ of type $(2,2,0)$ satisfying
\begin{itemize}
\item $(S,+,0)$ is a commutative monoid,
\item $(S,\cdot)$ is a commutative semigroup,
\item $(x+y)z\approx xz+yz$,
\item $0x\approx0$.
\end{itemize}
\end{definition}

If $\mathbf S$ is a commutative semiring containing an element $1$ satisfying the identity $1x\approx x$ then $\mathbf S$ is called {\em unitary}, and if it satisfies the identity $xx\approx x$ then it is called {\em idempotent}. A (semi-)ring $(S,+,\cdot,0)$ is called {\em zero-{\rm(}semi-{\rm)}ring} if $xy=0$ for all $x,y\in S$.

It is evident that every (unitary) commutative ring is a (unitary) commutative semiring and that every distributive lattice $\mathbf L=(L,\vee,\wedge,0)$ with least element $0$ is an idempotent commutative semiring.

In the following let $\mathbb N$ denote the set of non-negative integers. Then clearly, $\mathbf N=(\mathbb N,+,\cdot,0)$ is a unitary commutative semiring. For every positive integer $a$ let $a\mathbb N$ denote the set $\{0,a,2a,3a,\ldots\}$ of all non-negative multiples of $a$. It is evident that $a\mathbf N=(a\mathbb N,+,\cdot,0)$ is again a commutative semiring which is unitary if and only if $a=1$.

We recall the following definition from \cite G:

\begin{definition}\label{def1}
Let $\mathbf S=(S,+,\cdot,0)$ be a commutative semiring. An {\em ideal} of $\mathbf S$ is a subset $I$ of $S$ satisfying
\begin{itemize}
\item $0\in I$,
\item if $a,b\in I$ then $a+b\in I$,
\item if $a\in I$ and $s\in S$ then $as\in I$.
\end{itemize}
\end{definition}

Note that in case that $\mathbf S$ is a ring, the ideals of $\mathbf S$ in the sense of Definition~\ref{def1} need not be ring ideals. Consider e.g.\ the zero-ring whose additive group is the group $(\mathbb Z,+,0)$ of the integers. Then the ideals of this zero-ring in the sense of Definition~\ref{def1} are the submonoids of $(\mathbb Z,+,0)$, whereas the ring ideals are the subgroups of $(\mathbb Z,+,0)$.

The converse is, of course, trivial for any ring $\mathbf S$: Every ring ideal of $\mathbf S$ is an ideal in the sense of Definition~\ref{def1}.

Let $\Id\mathbf S$ denote the set of all ideals of a commutative semiring $\mathbf S=(S,+,\cdot,0)$. It is clear that $\BId\mathbf S=(\Id\mathbf S,\subseteq)$ is a complete lattice with smallest element $\{0\}$ and greatest element $S$. Moreover, for $I,J\in\Id\mathbf S$ we have
\begin{align*}
  I\vee J & =I+J=\{i+j\mid i\in I,j\in J\}, \\
I\wedge J & =I\cap J.
\end{align*}
For $a\in S$ let $I(a)$ denote the ideal of $\mathbf S$ generated by $a$. Obviously, $I(a)=a\mathbb N+aS$. The lattice $\BId\mathbf S$ need not be modular as the following example shows:

\begin{example}\label{ex2}
The Hasse diagram of the ideal lattice of the commutative zero-semiring $\mathbf S=(S,+,\cdot,0)$ on $\{0,a,b,c,d,e,f,g\}$ defined by
\[
\begin{array}{c|cccccccc}
+ & 0 & a & b & c & d & e & f & g \\
\hline
0 & 0 & a & b & c & d & e & f & g \\
a & a & b & c & 0 & e & f & g & d \\
b & b & c & 0 & a & f & g & d & e \\
c & c & 0 & a & b & g & d & e & f \\
d & d & e & f & g & d & e & f & g \\
e & e & f & g & d & e & f & g & d \\
f & f & g & d & e & f & g & d & e \\
g & g & d & e & f & g & d & e & f
\end{array}
\]
looks as follows {\rm(}see Figure~1{\rm)}:
\vspace*{-2mm}
\begin{center}
\setlength{\unitlength}{8mm}
\begin{picture}(6,12)
\put(3,1){\circle*{.2}}
\put(5,3){\circle*{.2}}
\put(1,5){\circle*{.2}}
\put(5,5){\circle*{.2}}
\put(1,7){\circle*{.2}}
\put(3,7){\circle*{.2}}
\put(5,7){\circle*{.2}}
\put(3,9){\circle*{.2}}
\put(3,11){\circle*{.2}}
\put(3,1){\line(-1,2)2}
\put(3,1){\line(1,1)2}
\put(1,5){\line(0,1)2}
\put(5,3){\line(0,1)4}
\put(3,7){\line(-1,-1)2}
\put(3,7){\line(1,-1)2}
\put(3,9){\line(1,-1)2}
\put(3,11){\line(-1,-2)2}
\put(3,11){\line(0,-1)4}
\put(2.6,.35){$\{0\}$}
\put(-.45,4.85){$\{0,b\}$}
\put(5.25,2.85){$\{0,d\}$}
\put(5.25,4.85){$\{0,d,f\}$}
\put(-1.4,6.85){$\{0,a,b,c\}$}
\put(5.25,6.85){$\{0,d,e,f,g\}$}
\put(3.4,8.85){$\{0,b,d,e,f,g\}$}
\put(2.8,11.3){$S$}
\put(1.85,5.5){$\{0,b,d,f\}$}
\put(2.35,-.5){{\rm Fig.~1}}
\end{picture}
\end{center}
\vspace*{6mm}
and hence this lattice is not modular. Observe that $(S,+,0)$ is isomorphic to the direct product of its submonoids $\{0,a,b,c\}$ {\rm(}four-element cyclic group{\rm)} and $\{0,d\}$ {\rm(}two-element join-semilattice{\rm)}. The ideals of the semiring $\mathbf S$ are the submonoids of $(S,+,0)$.
\end{example}

Let $\BCon\mathbf S=(\Con\mathbf S,\subseteq)$ denote the congruence lattice of a commutative semiring $\mathbf S$. A {\em congruence kernel} of $\mathbf S$ is a set of the form $[0]\Theta$ with $\Theta\in\Con\mathbf S$. It is well known (cf.\ \cite G) that every congruence kernel is an ideal of $\mathbf S$, but not vice versa. Let
\[
\BKer\mathbf S=(\Ker\mathbf S,\subseteq)=(\{[0]\Theta\mid\Theta\in\Con\mathbf S\},\subseteq)
\]
denote the (complete) {\em lattice of congruence kernels} of $\mathbf S$. In contrast to rings, $\BCon\mathbf S$ and $\BKer\mathbf S$ need not be isomorphic as the following example shows, in which two different congruences have the same kernel.

\begin{example}
Consider the three-element lattice $\mathbf D_3=(D_3,\vee,\wedge,0)=(\{0,a,1\},\vee,\wedge,0)$. Then the Hasse diagram of $\BCon\mathbf D_3$ looks as follows {\rm(}see Figure~2{\rm)}:
\vspace*{-2mm}
\begin{center}
\setlength{\unitlength}{8mm}
\begin{picture}(6,6)
\put(3,1){\circle*{.2}}
\put(1,3){\circle*{.2}}
\put(5,3){\circle*{.2}}
\put(3,5){\circle*{.2}}
\put(3,1){\line(-1,1)2}
\put(3,1){\line(1,1)2}
\put(3,5){\line(-1,-1)2}
\put(3,5){\line(1,-1)2}
\put(2.8,.35){$\Delta$}
\put(.2,2.8){$\Theta_1$}
\put(5.25,2.8){$\Theta_2$}
\put(2.8,5.35){$\nabla$}
\put(2.35,-.5){{\rm Fig.~2}}
\end{picture}
\end{center}
\vspace*{3mm}
where
\[
\Theta_1=\{0,a\}^2\cup\{1\}^2\text{ and }\Theta_2=\{0\}^2\cup\{a,1\}^2.
\]
However, $\Delta$ and $\Theta_2$ have the same kernel $\{0\}$. Hence
\[
\BKer\mathbf D_3=(\{\{0\},\{0,a\},D_3\},\subseteq)
\]
is a three-element chain and $\BCon\mathbf D_3\not\cong\BKer\mathbf D_3$. Moreover, even the mapping $\Theta\mapsto[0]\Theta$ is not a homomorphism from $\BCon\mathbf D_3$ onto $\BKer\mathbf D_3$ since
\[
[0](\Theta_1\vee\Theta_2)=[0]\nabla=D_3\neq\{0,a\}=\{0,a\}\vee\{0\}=[0]\Theta_1\vee[0]\Theta_2.
\]
\end{example}

\section{Ideals of direct products of commutative semirings}

In the following we are interested in ideals on a direct product of two commutative semirings. Let $\mathbf S_1$ and $\mathbf S_2$ be commutative semirings. Of course, if $I_1\in\Id\mathbf S_1$ and $I_2\in\Id\mathbf S_2$ then $I_1\times I_2\in\Id(\mathbf S_1\times\mathbf S_2)$. An ideal $I$ of $\mathbf S_1\times\mathbf S_2$ is called {\em directly decomposable} if there exist $I_1\in\Id\mathbf S_1$ and $I_2\in\Id\mathbf S_2$ with $I_1\times I_2=I$. If $I$ is not directly decomposable then it is called a {\em skew ideal}. The aim of this paper is to characterize those commutative semirings which have directly decomposable ideals.

\begin{example}\label{ex1}
If $\mathbf R_2=(R_2,+,\cdot,0)=(\{0,1\},+,\cdot,0)$ denotes the two-element zero-ring and $\mathbf D_2=(D_2,\vee,\wedge,0)=(\{0,1\},\vee,\wedge,0)$ the two-element lattice then $\mathbf S:=\mathbf R_2\times\mathbf D_2$ has the non-trivial ideals
\begin{align*}
& \{0\}\times D_2, \\
& R_2\times\{0\}, \\
& \{(0,0),(0,1),(1,1)\}
\end{align*}
and hence $\BId\mathbf S\cong\mathbf N_5$ which is not modular and, moreover, the last mentioned ideal is skew.
\end{example}

\begin{example}\label{ex3}
If $\mathbf R_4=(R_4,+,\cdot,0)=(\{0,a,b,c\},+,\cdot,0)$ denotes the zero-ring whose additive group is the Kleinian $4$-group defined by
\[
\begin{array}{c|cccc}
+ & 0 & a & b & c \\
\hline
0 & 0 & a & b & c \\
a & a & 0 & c & b \\
b & b & c & 0 & a \\
c & c & b & a & 0
\end{array}
\]
then $\mathbf S:=\mathbf R_4\times\mathbf D_2$ has the non-trivial ideals
\begin{align*}
& \{0,a\}\times\{0\}, \\
& \{0,b\}\times\{0\}, \\
& \{0,c\}\times\{0\}, \\
& R_4\times\{0\}, \\
& \{0\}\times D_2, \\
& \{0,a\}\times D_2, \\
& \{0,b\}\times D_2, \\
& \{0,c\}\times D_2, \\
& \{(0,0),(0,1),(a,1)\}, \\
& \{(0,0),(0,1),(b,1)\}, \\
& \{(0,0),(0,1),(c,1)\}, \\
& \{(0,0),(0,1),(a,1),(b,1),(c,1)\}, \\
& \{(0,0),(a,0),(0,1),(a,1),(b,1),(c,1)\}, \\
& \{(0,0),(b,0),(0,1),(a,1),(b,1),(c,1)\}, \\
& \{(0,0),(c,0),(0,1),(a,1),(b,1),(c,1)\}
\end{align*}
the last seven of which are skew.
\end{example}

For sets $A$ and $B$ let $\pi_1$ and $\pi_2$ denote the first and second projection from $A\times B$ onto $A$ and $B$, respectively. Note that for any subset $C$ of $A\times B$ we have $C\subseteq\pi_1(C)\times\pi_2(C)$. Furthermore, if $C$ is of the form $A_1\times B_1$ with $A_1\subseteq A$ and $B_1\subseteq B$ then $\pi_1(C)=A_1$ and $\pi_2(C)=B_1$.

We borrow the method from \cite{FH} (which was used also in \cite{CEL18}) to prove the following theorem:

\begin{theorem}\label{th1}
Let $\mathbf S_1=(S_1,+,\cdot,0)$ and $\mathbf S_2=(S_2,+,\cdot,0)$ be commutative semirings and $I\in\Id(\mathbf S_1\times\mathbf S_2)$ and consider the following assertions:
\begin{enumerate}
\item[{\rm(i)}] $I$ is directly decomposable,
\item[{\rm(ii)}] $(S_1\times\{0\})\cap((\{0\}\times S_2)+I)\subseteq I$ and $((S_1\times\{0\})+I)\cap(\{0\}\times S_2)\subseteq I$,
\item[{\rm(iii)}] if $(a,b)\in I$ then $(a,0),(0,b)\in I$,
\item[{\rm(iv)}] $((S_1\times\{0\})+I)\cap((\{0\}\times S_2)+I)=I$.
\end{enumerate}
Then {\rm(iii)} $\Leftrightarrow$ {\rm(i)} $\Rightarrow$ {\rm(iv)} $\Rightarrow$ {\rm(ii)}.
\end{theorem}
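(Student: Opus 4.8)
The plan is to prove the chain by establishing $(i)\Leftrightarrow(iii)$ first, then $(i)\Rightarrow(iv)$, and finally $(iv)\Rightarrow(ii)$, since each link rests only on elementary manipulations of products together with the defining closure properties of ideals. The direction $(i)\Rightarrow(iii)$ is immediate: if $I=I_1\times I_2$ and $(a,b)\in I$, then $a\in I_1$ and $b\in I_2$, and using $0\in I_2$ and $0\in I_1$ one gets $(a,0)\in I_1\times I_2=I$ and $(0,b)\in I$.

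The step carrying the most content is $(iii)\Rightarrow(i)$. Here I would set $I_1:=\pi_1(I)$ and $I_2:=\pi_2(I)$ and first verify that these are genuine ideals of $\mathbf S_1$ and $\mathbf S_2$. Containment of $0$ and closure under $+$ follow at once from the corresponding properties of $I$; for the absorption property note that if $a\in\pi_1(I)$, say $(a,b)\in I$, and $s\in S_1$, then $(a,b)(s,0)=(as,0)\in I$, whence $as\in\pi_1(I)$, and symmetrically for $\pi_2(I)$. The inclusion $I\subseteq\pi_1(I)\times\pi_2(I)=I_1\times I_2$ is the general fact recorded just before the theorem. For the reverse inclusion take $(a,b)\in I_1\times I_2$; then $(a,b')\in I$ and $(a',b)\in I$ for suitable $a',b'$, so (iii) yields $(a,0),(0,b)\in I$, and additive closure of $I$ gives $(a,b)=(a,0)+(0,b)\in I$. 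Thus $I_1\times I_2=I$, which is (i).

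For $(i)\Rightarrow(iv)$ I would simply compute the two summand sets. Writing $I=I_1\times I_2$ and using $S_1+I_1=S_1$ (valid because $0\in I_1$ and $S_1$ is closed under $+$), one obtains $(S_1\times\{0\})+I=S_1\times I_2$ and, symmetrically, $(\{0\}\times S_2)+I=I_1\times S_2$. Intersecting these two rectangles gives $(S_1\times I_2)\cap(I_1\times S_2)=(S_1\cap I_1)\times(I_2\cap S_2)=I_1\times I_2=I$, which is exactly (iv). Finally, $(iv)\Rightarrow(ii)$ is immediate: since $0\in I$ we have $S_1\times\{0\}\subseteq(S_1\times\{0\})+I$ and $\{0\}\times S_2\subseteq(\{0\}\times S_2)+I$, so each of the two intersections occurring in (ii) is contained in $((S_1\times\{0\})+I)\cap((\{0\}\times S_2)+I)$, which equals $I$ by (iv).

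There is no deep difficulty in this argument; the only place demanding genuine care is $(iii)\Rightarrow(i)$, where one must confirm that the projections $\pi_1(I),\pi_2(I)$ are themselves ideals (the absorption argument via multiplication by $(s,0)$) and must invoke additive closure of $I$ to recombine $(a,0)$ and $(0,b)$ into $(a,b)$. The remaining implications reduce to the rectangle identity $(A\times B)\cap(C\times D)=(A\cap C)\times(B\cap D)$ and the trivial containment $X\times\{0\}\subseteq(X\times\{0\})+I$.
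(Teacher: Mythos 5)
Your proof is correct and takes essentially the same route as the paper's: it decomposes $I$ via the projections $\pi_1(I),\pi_2(I)$, recombines $(a,b)=(a,0)+(0,b)$ using additive closure for (iii)$\Rightarrow$(i), and computes $((S_1\times\{0\})+I)\cap((\{0\}\times S_2)+I)=(S_1\times I_2)\cap(I_1\times S_2)=I$ for (i)$\Rightarrow$(iv). You merely make explicit some details the paper leaves implicit, notably the verification that $\pi_1(I)$ and $\pi_2(I)$ are themselves ideals (via multiplication by $(s,0)$), which is a welcome but not substantively different addition.
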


\begin{proof}
$\mbox{}$ \\
(iii) $\Rightarrow$ (i): If $(a,b)\in\pi_1(I)\times\pi_2(I)$ then there exists some pair $(c,d)\in S_1\times S_2$ with $(a,d),(c,b)\in I$, hence $(a,0),(0,b)\in I$ which shows $(a,b)=(a,0)+(0,b)\in I$. \\
(i) $\Rightarrow$ (iii): This is clear. \\
(i) $\Rightarrow$ (iv): If $I=I_1\times I_2$ then
\begin{align*}
& ((S_1\times\{0\})+I)\cap((\{0\}\times S_2)+I)= \\
& =((S_1\times\{0\})+(I_1\times I_2))\cap((\{0\}\times S_2)+(I_1\times I_2))=(S_1\times I_2)\cap(I_1\times S_2)= \\
& =I_1\times I_2=I.
\end{align*}
(iv) $\Rightarrow$ (ii): This follows immediately.
\end{proof}

That (ii) does not imply (iii) can be seen by considering the ideal $I:=\{(0,0),(0,1),(a,1)\}$ of $\mathbf S$ in Example~\ref{ex3}. Since
\begin{align*}
(S_1\times\{0\})\cap((\{0\}\times S_2)+I) & =\{(0,0)\}\subseteq I, \\
((S_1\times\{0\})+I)\cap(\{0\}\times S_2) & =\{(0,0),(0,1)\}\subseteq I,
\end{align*}
(ii) holds. Because of $(a,1)\in I$ and $(a,0)\notin I$, (iii) does not hold. This shows that (ii) does not imply (iii). It is worth noticing that the implication (ii) $\Rightarrow$ (iii) holds in the case of commutative rings since in this case
\begin{align*}
(a,0) & =(0,-b)+(a,b), \\
(0,b) & =(-a,0)+(a,b).
\end{align*}
So in this case (i) and (ii) are equivalent.

\begin{example}
According to {\rm(iii)} of Theorem~\ref{th1}, the ideal $I(4,6)$ of $2\mathbf N\times2\mathbf N$ is not directly decomposable since
\begin{align*}
(4,0),(0,6)\notin I(4,6) & =(4,6)\mathbb N+(4,6)(2\mathbb N\times2\mathbb N)=(4,6)\mathbb N+(8\mathbb N\times12\mathbb N)= \\
                         & =(8\mathbb N\times12\mathbb N)\cup((4+8\mathbb N)\times(6+12\mathbb N)).
\end{align*}
\end{example}

Next we present several simple sufficient conditions for direct decomposability of ideals.

\begin{corollary}
Let $\mathbf S_1$ and $\mathbf S_2$ be commutative semirings such that one of the following conditions hold:
\begin{enumerate}
\item[{\rm(i)}] $\mathbf S_1$ and $\mathbf S_2$ are unitary,
\item[{\rm(ii)}] $\mathbf S_1$ is unitary and $\mathbf S_2$ is idempotent,
\item[{\rm(iii)}] $\mathbf S_1$ and $\mathbf S_2$ are idempotent,
\item[{\rm(iv)}] $\mathbf S_1$ and $\mathbf S_2$ are rings and $\BId(\mathbf S_1\times\mathbf S_2)$ is distributive.
\end{enumerate}
Then every ideal of $\mathbf S_1\times\mathbf S_2$ is directly decomposable.
\end{corollary}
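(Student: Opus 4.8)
The plan is to verify each of the four conditions by showing that it forces Theorem~\ref{th1}(iii) to hold, since that condition is equivalent to direct decomposability. Thus for every case I need only take an arbitrary $(a,b)\in I$ and produce proofs that $(a,0)\in I$ and $(0,b)\in I$, using the closure of $I$ under addition and under multiplication by arbitrary elements of $S_1\times S_2$.

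For case (i), where both semirings are unitary, I would multiply $(a,b)\in I$ by the element $(1,0)\in S_1\times S_2$. Since $I$ is an ideal it is closed under multiplication by any semiring element, and $(a,b)\cdot(1,0)=(1a,0b)=(a,0)$ using $1a=a$ and $0b=0$. Symmetrically, multiplying by $(0,1)$ yields $(0,b)\in I$. This is the cleanest case and sets the template for the others.

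For case (ii), $\mathbf S_1$ is unitary and $\mathbf S_2$ is idempotent. Multiplying $(a,b)\in I$ by $(1,b)\in S_1\times S_2$ gives $(a, bb)=(a,b)$, which is unhelpful, so instead I would multiply by $(1,0)$ to obtain $(a,0)\in I$ just as before. To get $(0,b)$, the idempotence of $\mathbf S_2$ is the key: I would consider multiplying $(a,b)$ by an element whose first coordinate annihilates $a$ and whose second coordinate fixes $b$. The natural candidate is $(0,b)$ itself, giving $(a,b)\cdot(0,b)=(0a,bb)=(0,b)$ by $0a=0$ and idempotence $bb=b$; hence $(0,b)\in I$. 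Case (iii), with both semirings idempotent, is then handled by the same idempotence trick on both sides: multiplying $(a,b)$ by $(a,0)$ gives $(a,0)$ and by $(0,b)$ gives $(0,b)$.

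Case (iv) is the one I expect to be the main obstacle, since it is the only case that does not follow from a direct coordinate-wise multiplication trick and instead invokes the ring structure together with the lattice-theoretic hypothesis of distributivity of $\BId(\mathbf S_1\times\mathbf S_2)$. Here I would argue through condition (ii) of Theorem~\ref{th1} rather than (iii): the remark following the theorem observes that for rings, (ii) already implies (iii) because one can write $(a,0)=(0,-b)+(a,b)$ and $(0,b)=(-a,0)+(a,b)$ using additive inverses. So it suffices to establish (ii). The plan is to use distributivity of the ideal lattice to show that the meets appearing in condition (ii) collapse into $I$; concretely, with $A:=S_1\times\{0\}$ and $B:=\{0\}\times S_2$ treated as ideals, distributivity lets me rewrite $A\cap(B+I)$ as $(A\cap B)+(A\cap I)$, and since $A\cap B=\{(0,0)\}$ this reduces the expression to a subset of $I$, giving the first inclusion of (ii); the second is symmetric. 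Combining this with the ring argument for (ii)~$\Rightarrow$~(iii) then yields direct decomposability. The delicate point to check carefully is that the distributive law may be applied to $A$, $B$, and $I$ inside $\BId(\mathbf S_1\times\mathbf S_2)$ with the meet being genuine intersection and the join being the ideal sum, which is exactly the lattice structure described in the excerpt.
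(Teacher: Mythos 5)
Your proposal is correct and follows essentially the same route as the paper: cases (i)--(iii) via condition (iii) of Theorem~\ref{th1} using exactly the multipliers $(1,0)$, $(0,1)$, $(0,b)$, $(a,0)$, and case (iv) via condition (ii) combined with the remark that (ii) implies (iii) for rings. Your distributivity computation $A\cap(B+I)=(A\cap B)+(A\cap I)=A\cap I\subseteq I$ spells out the step the paper leaves implicit in its one-line treatment of (iv), and it is valid since $S_1\times\{0\}$ and $\{0\}\times S_2$ are ideals and the join in $\BId(\mathbf S_1\times\mathbf S_2)$ is the ideal sum.
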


\begin{proof}
Assume $(a,b)\in I\in\Id(\mathbf S_1\times\mathbf S_2)$. Then \\
$(a,0)=(a,b)(1,0)\in I$ and $(0,b)=(a,b)(0,1)\in I$ in case (i), \\
$(a,0)=(a,b)(1,0)\in I$ and $(0,b)=(a,b)(0,b)\in I$ in case (ii) and \\
$(a,0)=(a,b)(a,0)\in I$ and $(0,b)=(a,b)(0,b)\in I$ in case (iii) \\
showing direct decomposability of $I$ according to condition (iii) of Theorem~\ref{th1}. In case (iv), direct decomposability of $I$ follows from condition (ii) of Theorem~\ref{th1}.
\end{proof}

If a field $\mathbf F=(F,+,\cdot)$ is considered as a ring then it has no proper ideals. However, the same is valid also in the case of semiring ideals. Namely, if $I$ is a non-zero semiring ideal in $\mathbf F$ and $d\in I\setminus\{0\}$, then for each $x\in F$ we have $x=dd^{-1}x\in I$ proving $I=F$.

\begin{proposition}
If $\mathbf S$ is a commutative semiring and $\mathbf F$ a field then every ideal of $\mathbf S\times\mathbf F$ is directly decomposable.
\end{proposition}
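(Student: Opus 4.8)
The plan is to reduce everything to criterion (iii) of Theorem~\ref{th1}: it suffices to show that whenever a pair $(a,b)$ lies in an ideal $I$ of $\mathbf S\times\mathbf F$, both $(a,0)$ and $(0,b)$ lie in $I$ as well. Once this implication is established, $I$ is directly decomposable by Theorem~\ref{th1}, so no separate description of $\pi_1(I)$ and $\pi_2(I)$ is required.

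So I would fix $(a,b)\in I$ with $a\in S$ and $b\in F$, and let $1$ and $-1$ denote the unit of the field $\mathbf F$ and its additive inverse. The first step reads off the second coordinate: since $(0,1)\in S\times F$ and $I$ is closed under multiplication by arbitrary elements of $S\times F$, we obtain $(a,b)(0,1)=(0,b)\in I$. This already handles the projection onto the factor $\mathbf F$.

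The delicate half is extracting $(a,0)$, and here the main obstacle is that $\mathbf S$ need not be unitary: in the unitary case one would simply write $(a,0)=(a,b)(1,0)$, but without a unit in $\mathbf S$ that product is unavailable. Instead I would exploit the additive inverses present in the field, using them in the \emph{second} coordinate only. Multiplying by $(0,-1)\in S\times F$ gives $(a,b)(0,-1)=(0,-b)\in I$, and then closure of $I$ under addition yields $(a,b)+(0,-b)=(a,0)\in I$. This is precisely the ring-style cancellation recorded after Theorem~\ref{th1}, now carried out in the one coordinate where inverses are guaranteed to exist.

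Combining the two steps, $(a,b)\in I$ forces $(a,0),(0,b)\in I$, so condition (iii) of Theorem~\ref{th1} holds and $I$ is directly decomposable. I expect no genuine difficulty beyond the observation that the field structure compensates, in the second coordinate, for the possible absence of a unit in $\mathbf S$.
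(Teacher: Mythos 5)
Your proof is correct, but it takes a genuinely different route from the paper's. The paper does not invoke Theorem~\ref{th1} at all: it sets $J:=\pi_1(I)$, splits into the cases $I\subseteq S\times\{0\}$ (where $I=J\times\{0\}$) and $I\not\subseteq S\times\{0\}$, and in the latter case fixes $(a,b)\in I$ with $b\neq0$ and uses the \emph{multiplicative} inverse $b^{-1}$ to absorb an arbitrary second coordinate, via $(c,d)=(c,e)+(a,b)(0,b^{-1}(d-e))\in I$, concluding $I=J\times F$. You instead verify criterion (iii) of Theorem~\ref{th1} using only the elements $(0,1)$ and $(0,-1)$: indeed $(0,b)=(a,b)(0,1)\in I$, $(0,-b)=(a,b)(0,-1)\in I$ (note $a\cdot0=0$ holds by the semiring identity $0x\approx0$), and $(a,0)=(a,b)+(0,-b)\in I$. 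Since your argument uses only the unit and additive inverses of $\mathbf F$ --- never inverses of nonzero elements --- it actually proves a strictly stronger statement: every ideal of $\mathbf S\times\mathbf R$ is directly decomposable for any commutative semiring $\mathbf S$ and any \emph{unitary commutative ring} $\mathbf R$, not just a field. What the paper's approach buys in exchange is a sharper structural conclusion specific to fields: each ideal of $\mathbf S\times\mathbf F$ has the form $J\times\{0\}$ or $J\times F$, reflecting the observation made just before the proposition that a field has no proper nonzero semiring ideals; your criterion-(iii) argument yields $I=\pi_1(I)\times\pi_2(I)$ without this extra information.
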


\begin{proof}
Assume $\mathbf S=(S,+,\cdot,0)$ and $\mathbf F=(F,+,\cdot)$, let $I\in\Id(\mathbf S\times\mathbf F)$ and put $J:=\pi_1(I)$. If $I\subseteq S\times\{0\}$ then $I=J\times\{0\}$. Now assume $I\not\subseteq S\times\{0\}$. Then there exists some $(a,b)\in I$ with $b\neq0$. If $(c,d)\in J\times F$ then $c\in J=\pi_1(I)$. Thus there exists some $e\in F$ with $(c,e)\in I$ and hence
\[
(c,d)=(c,e)+(a,b)(0,b^{-1}(d-e))\in I
\]
showing $I=J\times F$. Hence, $\mathbf S\times\mathbf F$ has directly decomposable ideals.
\end{proof}

\section{Congruence kernels of direct products of commutative semirings}

Now we draw our attention to congruence kernels.

If $\Theta_1\in\Con\mathbf S_1$ and $\Theta_2\in\mathbf S_2$ then
\[
\Theta_1\times\Theta_2:=\{((x_1,x_2),(y_1,y_2))\mid(x_1,y_1)\in\Theta_1,(x_2,y_2)\in\Theta_2\}\in\Con(\mathbf S_1\times\mathbf S_2)
\]
and $[(0,0)](\Theta_1\times\Theta_2)=[0]\Theta_1\times[0]\Theta_2$. However, there may exist congruences $\Theta$ on $\mathbf S_1\times\mathbf S_2$ such that $[(0,0)]\Theta\neq[0]\Theta_1\times[0]\Theta_2$ for all possible $\Theta_1\in\Con\mathbf S_1$ and $\Theta_2\in\Con\mathbf S_2$. It should be noted that Fraser and Horn (cf.\ \cite{FH}) presented necessary and sufficient conditions for direct decomposability of congruences. In the following we will modify these conditions for congruence kernels.

If $\mathbf S_1=(S_1,+,\cdot,0)$ and $\mathbf S_2=(S_2,+,\cdot,0)$ are commutative semirings, $\Theta\in\Con(\mathbf S_1\times\mathbf S_2)$ and $i\in\{1,2\}$ then we put
\begin{align*}
\pi_i(\Theta) & :=\{(\pi_i(x),\pi_i(y))\mid(x,y)\in\Theta\}, \\
        \Pi_i & :=\{(x,y)\in(S_1\times S_2)^2\mid\pi_i(x)=\pi_i(y)\}.
\end{align*}
Note that $\Theta_i:=\pi_i(\Theta)\in\Con(\mathbf S_i)$, $\Pi_i\in\Con(\mathbf S_1\times\mathbf S_2)$ and $\Pi_1\cap\Pi_2=\{(x,x)\mid x\in S_1\times S_2\}$. Let us remark that in general $[0]\Theta_i\neq\pi_i([(0,0)]\Theta)$, namely e.g.\ $a\in[0]\Theta_1$ is equivalent to the fact that there exist $b,c\in S_2$ with $(a,b)\in[(0,c)]\Theta$.

We call the kernel $[(0,0)]\Theta$ {\em directly decomposable} if
\[
[(0,0)]\Theta=\pi_1([(0,0)]\Theta)\times\pi_2([(0,0)]\Theta),
\]
and furthermore, we call the kernel $[(0,0)]\Theta$ {\em strongly directly decomposable} if
\[
[(0,0)]\Theta=[0]\Theta_1\times[0]\Theta_2.
\]
Note that
\begin{align*}
\pi_i([(0,0)]\Theta) & \subseteq[0]\Theta_i\text{ for }i=1,2, \\
       [(0,0)]\Theta & \subseteq\pi_1([(0,0)]\Theta)\times\pi_2([(0,0)]\Theta),
\end{align*}
thus strongly direct decomposability implies direct decomposability (cf.\ also the following Theorems~\ref{th2} and \ref{th3}).

We characterize strongly directly decomposable congruence kernels as follows:

\begin{theorem}\label{th2}
If $\mathbf S_1=(S_1,+,\cdot,0)$ and $\mathbf S_2=(S_2,+,\cdot,0)$ are commutative semirings and $\Theta\in\Con(\mathbf S_1\times\mathbf S_2)$ then $[(0,0)]\Theta$ is strongly directly decomposable if and only if the following holds:
\[
\text{If }(a,b)\T(0,c)\text{ and }(d,e)\T(f,0)\text{ then }(a,e)\T(0,0)
\]
for $(a,b),(0,c),(d,e),(f,0)\in S_1\times S_2$.
\end{theorem}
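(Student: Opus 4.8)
The plan is to reduce the stated equivalence to a single inclusion and then to recognize that this inclusion, once the membership conditions are unfolded, is literally the displayed implication. The proof will be purely a matter of translating between the class $[(0,0)]\Theta$ and the classes $[0]\Theta_1$, $[0]\Theta_2$ of the projected congruences.

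First I would invoke the two inclusions recorded immediately before the theorem, namely $\pi_i([(0,0)]\Theta)\subseteq[0]\Theta_i$ for $i=1,2$ together with $[(0,0)]\Theta\subseteq\pi_1([(0,0)]\Theta)\times\pi_2([(0,0)]\Theta)$. Combining them yields $[(0,0)]\Theta\subseteq[0]\Theta_1\times[0]\Theta_2$ unconditionally. Consequently the equality $[(0,0)]\Theta=[0]\Theta_1\times[0]\Theta_2$ defining strong direct decomposability holds if and only if the reverse inclusion $[0]\Theta_1\times[0]\Theta_2\subseteq[(0,0)]\Theta$ holds, so this reverse inclusion is the only content that needs to be checked.

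Next I would translate membership in $[0]\Theta_1\times[0]\Theta_2$ using the description of the projected congruences. Since $\Theta_1=\pi_1(\Theta)$, an element $a$ lies in $[0]\Theta_1$ exactly when $(a,0)\in\Theta_1$, that is, when there exist $b,c\in S_2$ with $(a,b)\T(0,c)$; this is precisely the characterization noted in the remark preceding the theorem. Symmetrically, $e\in[0]\Theta_2$ exactly when there exist $d,f\in S_1$ with $(d,e)\T(f,0)$. Hence $(a,e)\in[0]\Theta_1\times[0]\Theta_2$ if and only if witnessing pairs of both kinds exist.

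Finally, membership of $(a,e)$ in $[(0,0)]\Theta$ is simply $(a,e)\T(0,0)$, so the reverse inclusion reads: whenever there are $b,c$ with $(a,b)\T(0,c)$ and $d,f$ with $(d,e)\T(f,0)$, one has $(a,e)\T(0,0)$. This is verbatim the condition in the statement, so the chain of equivalences closes and both directions are obtained simultaneously. The argument is entirely definitional; the only point demanding care is the asymmetry between the two coordinates in the witnessing pairs, so I would keep the roles of the first and second projections strictly separated throughout to avoid inadvertently strengthening the hypothesis.
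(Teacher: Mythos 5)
Your proposal is correct and matches the paper's own proof in all essentials: both arguments hinge on the trivial inclusion $[(0,0)]\Theta\subseteq[0]\Theta_1\times[0]\Theta_2$ and on unfolding $a\in[0]\Theta_1$ and $e\in[0]\Theta_2$ into the witnessing pairs $(a,b)\T(0,c)$ and $(d,e)\T(f,0)$, exactly as in the remark preceding the theorem. Your reorganization as a single chain of equivalences (rather than the paper's two separate implications) is purely presentational, so nothing further is needed.
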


\begin{proof}
If $[(0,0)]\Theta$ is strongly directly decomposable and
\[
(a,b)\T(0,c)\text{ and }(d,e)\T(f,0)
\]
for $(a,b),(0,c),(d,e),(f,0)\in S_1\times S_2$ then $(a,e)\in[0]\Theta_1\times[0]\Theta_2=[(0,0)]\Theta$. If, conversely, the condition of the theorem holds and $(g,h)\in[0]\Theta_1\times[0]\Theta_2$ then there exist $i,j\in S_1$ and $k,l\in S_2$ with $(g,k)\T(0,l)$ and $(i,h)\T(j,0)$ and hence $(g,h)\in[(0,0)]\Theta$ showing $[0]\Theta_1\times[0]\Theta_2\subseteq[(0,0)]\Theta$. The converse inclusion is trivial.
\end{proof}

Using this result we can prove the following

\begin{theorem}\label{th4}
If $\mathbf S_1$ and $\mathbf S_2$ are commutative semirings, $\Theta\in\Con(\mathbf S_1\times\mathbf S_2)$ and
\begin{align*}
[(0,0)]((\Theta\vee\Pi_1)\cap\Pi_2) & \subseteq[(0,0)]\Theta, \\
[(0,0)]((\Theta\vee\Pi_2)\cap\Pi_1) & \subseteq[(0,0)]\Theta
\end{align*}
then $[(0,0)]\Theta$ is strongly directly decomposable.
\end{theorem}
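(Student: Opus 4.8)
The plan is to verify the combinatorial criterion of Theorem~\ref{th2}. So I would assume $(a,b)\Theta(0,c)$ and $(d,e)\Theta(f,0)$ with all four elements in $S_1\times S_2$, and try to deduce $(a,e)\Theta(0,0)$. The whole argument will consist of translating the two hypotheses, which are phrased in terms of the $(0,0)$-classes of $(\Theta\vee\Pi_1)\cap\Pi_2$ and of $(\Theta\vee\Pi_2)\cap\Pi_1$, into statements about $\Theta$ alone.

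The observation that makes everything split is the identity $(a,e)=(a,0)+(0,e)$. Since $\Theta$ is compatible with $+$, it therefore suffices to establish the two separate facts $(a,0)\Theta(0,0)$ and $(0,e)\Theta(0,0)$ and then add them. Thus my first move is to reduce the goal to these two membership statements in $[(0,0)]\Theta$.

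To obtain $(a,0)\Theta(0,0)$ I would show that $(a,0)$ lies in $[(0,0)]\bigl((\Theta\vee\Pi_1)\cap\Pi_2\bigr)$ and then invoke the first hypothesis. The $\Pi_2$-part is immediate, because $(a,0)$ and $(0,0)$ share their second coordinate. For the $(\Theta\vee\Pi_1)$-part I would use that the join of two congruences is the transitive closure of their union and exhibit the chain
\[
(a,0)\;\Pi_1\;(a,b)\;\Theta\;(0,c)\;\Pi_1\;(0,0),
\]
whose first and last steps only change the second coordinate (hence are $\Pi_1$-steps) and whose middle step is the given relation. Symmetrically, to get $(0,e)\Theta(0,0)$ I would apply the second hypothesis, now using the trivial equality of first coordinates for the $\Pi_1$-part and the chain
\[
(0,e)\;\Pi_2\;(d,e)\;\Theta\;(f,0)\;\Pi_2\;(0,0)
\]
for the $(\Theta\vee\Pi_2)$-part. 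Combining $(a,0)\Theta(0,0)$ and $(0,e)\Theta(0,0)$ additively then yields $(a,e)\Theta(0,0)$, and the converse inclusion $[(0,0)]\Theta\subseteq[0]\Theta_1\times[0]\Theta_2$ from Theorem~\ref{th2} is already trivial.

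I do not expect a genuine obstacle; once the reduction $(a,e)=(a,0)+(0,e)$ is in place the rest is bookkeeping. The single point requiring care is the description of $\Theta\vee\Pi_i$ as the transitive closure of $\Theta\cup\Pi_i$ together with the correct matching of each hypothesis to its chain, i.e.\ keeping track of which coordinate each auxiliary $\Pi_i$-step is permitted to move; pairing the wrong hypothesis with the wrong chain would break the argument. The remaining ingredients — closure of $\Theta$ under $+$ and the elementary equality $(a,0)+(0,e)=(a,e)$ — are routine.
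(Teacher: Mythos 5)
Your proposal is correct and matches the paper's proof essentially step for step: the same reduction $(a,e)=(a,0)+(0,e)$, the same two chains $(a,0)\,\Pi_1\,(a,b)\,\Theta\,(0,c)\,\Pi_1\,(0,0)$ and $(0,e)\,\Pi_2\,(d,e)\,\Theta\,(f,0)\,\Pi_2\,(0,0)$ paired with the correct hypotheses, and the final appeal to Theorem~\ref{th2}. No gaps.
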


\begin{proof}
Let $(a,b),(0,c),(d,e),(f,0)\in S_1\times S_2$ and assume $(a,b)\T(0,c)$ and $(d,e)\T(f,0)$. Then
\[
(a,0)\PO(a,b)\T(0,c)\PO(0,0)\text{ and }(a,0)\PT(0,0)
\]
and hence
\[
(a,0)\in[(0,0)]((\Theta\vee\Pi_1)\cap\Pi_2)\subseteq[(0,0)]\Theta.
\]
Analogously,
\[
(0,e)\PT(d,e)\T(f,0)\PT(0,0)\text{ and }(0,e)\PO(0,0)
\]
and hence
\[
(0,e)\in[(0,0)]((\Theta\vee\Pi_2)\cap\Pi_1)\subseteq[(0,0)]\Theta.
\]
Therefore
\[
(a,e)=(a,0)+(0,e)\T(0,0)+(0,0)=(0,0).
\]
Now the assertion follows from Theorem~\ref{th2}.
\end{proof}

Recall that an {\em algebra} $\mathbf A$ with $0$ is called {\em distributive at $0$} (see e.g.\ \cite{CEL12}) if for all $\Theta,\Phi,\Psi\in\Con\mathbf A$ we have
\[
[0]((\Theta\vee\Phi)\cap\Psi)=[0]((\Theta\cap\Psi)\vee(\Phi\cap\Psi)).
\]
A {\em class} of algebras with $0$ is called {\em distributive at $0$} if any of its members has this property. Applying Theorem~\ref{th4} we can state:

\begin{corollary}
If $\mathbf S_1$ and $\mathbf S_2$ are commutative semirings and $\mathbf S_1\times\mathbf S_2$ is distributive at $(0,0)$ then the kernel of every congruence on $\mathbf S_1\times\mathbf S_2$ is strongly directly decomposable.
\end{corollary}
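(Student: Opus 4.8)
The plan is to verify the two hypotheses of Theorem~\ref{th4} by invoking the distributivity of $\mathbf S_1\times\mathbf S_2$ at $(0,0)$ together with the identity $\PO\cap\PT=\{(x,x)\mid x\in S_1\times S_2\}$ recorded just before Theorem~\ref{th2}. Concretely, I would apply the distributive law at $(0,0)$ to the three congruences $\Theta$, $\PO$, $\PT$, taking the roles $\Phi:=\PO$ and $\Psi:=\PT$, which yields
\[
[(0,0)]((\Theta\vee\PO)\cap\PT)=[(0,0)]((\Theta\cap\PT)\vee(\PO\cap\PT)).
\]

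Next I would simplify the right-hand side. Since $\PO\cap\PT=\Delta$ is the least congruence on $\mathbf S_1\times\mathbf S_2$, one has $(\Theta\cap\PT)\vee(\PO\cap\PT)=(\Theta\cap\PT)\vee\Delta=\Theta\cap\PT$. Because $\Theta\cap\PT\subseteq\Theta$ and the map $\Phi\mapsto[(0,0)]\Phi$ is monotone (the class of $(0,0)$ can only grow when the congruence grows), this gives
\[
[(0,0)]((\Theta\vee\PO)\cap\PT)=[(0,0)](\Theta\cap\PT)\subseteq[(0,0)]\Theta,
\]
which is exactly the first hypothesis of Theorem~\ref{th4}. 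By the symmetric argument, applying distributivity at $(0,0)$ with $\Phi:=\PT$ and $\Psi:=\PO$ and again using $\PO\cap\PT=\Delta$, I obtain
\[
[(0,0)]((\Theta\vee\PT)\cap\PO)=[(0,0)](\Theta\cap\PO)\subseteq[(0,0)]\Theta,
\]
the second hypothesis.

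With both inclusions established, Theorem~\ref{th4} applies directly and shows that $[(0,0)]\Theta$ is strongly directly decomposable; since $\Theta$ was an arbitrary congruence on $\mathbf S_1\times\mathbf S_2$, the claim follows. I expect no genuine obstacle here: the entire content is the bookkeeping observation that distributivity at $(0,0)$ converts the mixed expression $(\Theta\vee\PO)\cap\PT$ into one dominated by $\Theta$, once one remembers that $\PO$ and $\PT$ meet in the diagonal. The only point requiring a little care is matching the variable names of the definition of distributivity at $0$ with the congruences $\Theta$, $\PO$, $\PT$ in the correct order so that the unwanted join term collapses to $\Delta$.
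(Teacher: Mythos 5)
Your proposal is correct and follows essentially the same route as the paper: the paper's proof likewise instantiates distributivity at $(0,0)$ with $\Phi:=\Pi_1$, $\Psi:=\Pi_2$ (and symmetrically), collapses the term $\Pi_1\cap\Pi_2$ to the diagonal, and concludes $[(0,0)]((\Theta\vee\Pi_i)\cap\Pi_j)=[(0,0)](\Theta\cap\Pi_j)\subseteq[(0,0)]\Theta$ before invoking Theorem~\ref{th4}. Your explicit mention of monotonicity of the kernel map is a harmless elaboration of the paper's final inclusion.
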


\begin{proof}
For all $\Theta\in\Con(\mathbf S_1\times\mathbf S_2)$ we have
\begin{align*}
[(0,0)]((\Theta\vee\Pi_1)\cap\Pi_2) & =[(0,0)]((\Theta\cap\Pi_2)\vee(\Pi_1\cap\Pi_2))=[(0,0)](\Theta\cap\Pi_2)\subseteq[(0,0)]\Theta, \\
[(0,0)]((\Theta\vee\Pi_2)\cap\Pi_1) & =[(0,0)]((\Theta\cap\Pi_1)\vee(\Pi_2\cap\Pi_1))=[(0,0)](\Theta\cap\Pi_1)\subseteq[(0,0)]\Theta.
\end{align*}
Now the result follows from Theorem~\ref{th4}.
\end{proof}

Recall the Mal'cev type characterization of distributivity at $0$ from \cite{CEL12} (Theorem~8.2.2):

\begin{proposition}\label{prop1}
A variety with $0$ is distributive at $0$ if and only if there exist some positive integer $n$ and binary terms $t_0,\ldots,t_n$ such that
\begin{align*}
t_0(x,y) & \approx0, \\
t_i(0,y) & \approx0\text{ for }i\in\{0,\ldots,n\}, \\
t_i(x,0) & \approx t_{i+1}(x,0)\text{ for even }i\in\{0,\ldots,n-1\}, \\
t_i(x,x) & \approx t_{i+1}(x,x)\text{ for odd }i\in\{0,\ldots,n-1\}, \\
t_n(x,y) & \approx x.
\end{align*}
\end{proposition}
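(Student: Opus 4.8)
The plan is to prove the equivalence by passing to the free algebra $\mathbf F=F_{\mathcal V}(x,y)$ on two generators in the variety $\mathcal V$ and working with three canonical congruences on it: $\Theta:=\ker(y\mapsto0)$, $\Phi:=\ker(y\mapsto x)$ and $\Psi:=\ker(x\mapsto0)$, where $\ker(y\mapsto0)$ is the kernel of the endomorphism of $\mathbf F$ that sends $y$ to $0$ and fixes $x$, and similarly for the others. Thus binary terms $s,t$ satisfy $s\mathrel\Theta t$ iff $s(x,0)\approx t(x,0)$, $s\mathrel\Phi t$ iff $s(x,x)\approx t(x,x)$, and $s\mathrel\Psi t$ iff $s(0,y)\approx t(0,y)$ hold in $\mathcal V$. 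The guiding idea is that the five displayed identity families encode exactly a fence from $0$ to $x$ living inside the class $[0]((\Theta\cap\Psi)\vee(\Phi\cap\Psi))$ of $\mathbf F$.

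For the direction from distributivity at $0$ to the existence of the terms, I would first note that $x$ always lies in $[0]((\Theta\vee\Phi)\cap\Psi)$: we have $x\mathrel\Psi0$ because $x$ and $0$ agree after the substitution $x\mapsto0$, and $x\mathrel\Phi y\mathrel\Theta0$ because the substitution $y\mapsto x$ identifies $y$ with $x$ while $y\mapsto0$ identifies $y$ with $0$, whence $x\mathrel{(\Theta\vee\Phi)}0$. Applying distributivity at $0$ in $\mathbf F$ then forces $x\in[0]((\Theta\cap\Psi)\vee(\Phi\cap\Psi))$, so there is a finite fence $0=u_0,u_1,\dots,u_m=x$ in $\mathbf F$ with each consecutive pair related by $\Theta\cap\Psi$ or by $\Phi\cap\Psi$. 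Writing each $u_i$ as a binary term $t_i(x,y)$ and translating the defining congruences into identities gives $t_0\approx0$, $t_m\approx x$, $t_i(0,y)\approx0$ for all $i$ (from $u_i\mathrel\Psi0$), together with $t_i(x,0)\approx t_{i+1}(x,0)$ at the $\Theta$-steps and $t_i(x,x)\approx t_{i+1}(x,x)$ at the $\Phi$-steps. Finally I would normalize the fence so that $\Theta$-steps sit at even indices and $\Phi$-steps at odd indices, inserting a repeated term $t_{i+1}:=t_i$ whenever two successive steps use the same congruence or the first step has the wrong type; such a repetition is at once a $\Theta$-, $\Phi$- and $\Psi$-step and so costs nothing, and the length of the normalized fence is the required $n$.

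For the converse I would fix $\mathbf A\in\mathcal V$, congruences $\Theta,\Phi,\Psi\in\Con\mathbf A$ and an element $a\in[0]((\Theta\vee\Phi)\cap\Psi)$; only the inclusion $[0]((\Theta\vee\Phi)\cap\Psi)\subseteq[0]((\Theta\cap\Psi)\vee(\Phi\cap\Psi))$ needs proof, the reverse being automatic from monotonicity of $\cap$ and $\vee$. From $a\mathrel{(\Theta\vee\Phi)}0$ I obtain a fence $a=c_0,c_1,\dots,c_k=0$ whose consecutive pairs lie in $\Theta$ or in $\Phi$, and I record the single relation $a\mathrel\Psi0$. The device is the rectangular array $g_{i,j}:=t_i(a,c_j)$ for $0\le i\le n$, $0\le j\le k$. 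Here $t_0\approx0$ makes the bottom row identically $0$, $t_n\approx x$ makes the top row identically $a$, and $t_i(0,y)\approx0$ together with $a\mathrel\Psi0$ forces every entry to satisfy $g_{i,j}\mathrel\Psi0$. Consequently each horizontal step $g_{i,j},g_{i,j+1}$ inherits the chain relation $c_j\mathrel{\gamma}c_{j+1}$ with $\gamma\in\{\Theta,\Phi\}$ and, lying between two elements of $[0]\Psi$, actually sits in $\Theta\cap\Psi$ or $\Phi\cap\Psi$; meanwhile $t_i(x,0)\approx t_{i+1}(x,0)$ for even $i$ makes the vertical step an equality in the last column (where $c_k=0$), and $t_i(x,x)\approx t_{i+1}(x,x)$ for odd $i$ makes it an equality in the first column (where $c_0=a$). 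Descending one row at a time along a staircase — taking the free vertical step in the first column at odd $i$ and in the last column at even $i$, and sweeping horizontally across the array to reposition in between — connects $a=g_{n,0}$ to $0$ in the bottom row entirely through $(\Theta\cap\Psi)$- and $(\Phi\cap\Psi)$-steps, which is exactly the required membership.

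The step I expect to be the main obstacle is this converse: the fixed-length terms $t_0,\dots,t_n$ must accommodate fences of unbounded length $k$, and the resolution is to let the conclusion-fence grow (its length is of order $nk$) by traversing the two-dimensional array, rather than trying to match $k$ to $n$. The delicate bookkeeping is that the vertical equalities are available in exactly the column dictated by the current parity, and that horizontal sweeps may be inserted freely at any row precisely because every array entry lies in $[0]\Psi$; once this is in place, everything else is a routine dictionary between congruence fences in $\mathbf F$ and identities of $\mathcal V$.
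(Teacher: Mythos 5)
The paper contains no proof of this proposition: it is recalled verbatim as Theorem~8.2.2 of the cited monograph \cite{CEL12}, so there is no in-paper argument to compare yours against, and your proof has to stand on its own. It does: this is the canonical Jónsson-style Mal'cev argument localized at $0$, carried out correctly in both directions. Your choice of the kernels of the three substitution endomorphisms of $F_{\mathcal V}(x,y)$ is sound and in fact slightly slicker than the usual choice of the principal congruences $\Theta(y,0)$, $\Theta(x,y)$, $\Theta(x,0)$, because the dictionary ``$s\mathrel\Theta t$ iff $s(x,0)\approx t(x,0)$ holds in $\mathcal V$'' is then immediate in both directions (the two choices coincide on the free algebra anyway, though your argument never needs this). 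The forward direction is complete: $x\mathrel\Psi 0$ and $x\mathrel\Phi y\mathrel\Theta 0$ put $x$ in $[0]((\Theta\vee\Phi)\cap\Psi)$, every element of the resulting fence is $\Psi$-related to $0$ by transitivity (yielding $t_i(0,y)\approx 0$ for all $i$), and padding with repeated terms legitimately fixes the parity since a repetition is simultaneously a $\Theta$- and a $\Phi$-step. The converse is also watertight: in the array $g_{i,j}=t_i(a,c_j)$, the identity $t_i(0,y)\approx 0$ together with $a\mathrel\Psi 0$ puts every entry in $[0]\Psi$, so each horizontal step upgrades from $\Theta$ or $\Phi$ to $\Theta\cap\Psi$ or $\Phi\cap\Psi$ by transitivity of $\Psi$, the vertical equalities sit at column $0$ for odd $i$ (where $c_0=a$) and column $k$ for even $i$ (where $c_k=0$) exactly as you say, and the staircase descent correctly decouples the fixed length $n$ of the term chain from the unbounded fence length $k$ --- the point you rightly identify as the crux. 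You also correctly note that the reverse inclusion is automatic from monotonicity, so only one inclusion needs the terms.
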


We can apply Proposition~\ref{prop1} to idempotent commutative semirings.

\begin{corollary}
The variety of idempotent commutative semirings is distributive at $0$ and hence has strongly directly decomposable congruence kernels.
\end{corollary}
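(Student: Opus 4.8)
The plan is to apply the Mal'cev-type characterization of Proposition~\ref{prop1}: it suffices to exhibit a positive integer $n$ together with binary terms $t_0,\dots,t_n$ in the language of commutative semirings satisfying the five displayed identity schemes. Once distributivity at $0$ is established for the whole variety, the claim about strongly directly decomposable congruence kernels follows immediately from the Corollary preceding Proposition~\ref{prop1}, applied to $\mathbf S_1\times\mathbf S_2$, which again belongs to the variety.

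I would first look for the shortest possible chain. The requirement $t_i(0,y)\approx0$ for every $i$ forces each term to vanish when its first argument is $0$; since $0x\approx0$, products of the form $x\cdot(\cdots)$ are the natural candidates. This suggests trying $n=2$ with the single nontrivial middle term being the product $xy$. Concretely, I would set
\[
t_0(x,y):=0,\qquad t_1(x,y):=xy,\qquad t_2(x,y):=x.
\]

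The verification is then a short check. The endpoints $t_0(x,y)\approx0$ and $t_2(x,y)\approx x$ are built in. The vanishing conditions $t_i(0,y)\approx0$ hold trivially for $i\in\{0,2\}$ and for $i=1$ because $0y\approx0$. The even-index condition reduces to the single case $i=0$, where $t_0(x,0)\approx0$ and $t_1(x,0)\approx x0\approx0x\approx0$ agree, using commutativity of $\cdot$ together with $0x\approx0$. The odd-index condition reduces to $i=1$, where $t_1(x,x)\approx xx\approx x\approx t_2(x,x)$; this single step is the one place where the idempotency law $xx\approx x$ is used.

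The main---indeed the only---obstacle is discovering the term family rather than verifying it. The key observation is that the product $xy$ simultaneously vanishes on both axes $x=0$ and $y=0$, while restricting to the identity on the diagonal $x=y$ precisely because $xx\approx x$. Thus idempotency is exactly what makes the three-term chain work; with it, Proposition~\ref{prop1} yields distributivity at $0$, and the preceding Corollary then delivers strong direct decomposability of every congruence kernel on $\mathbf S_1\times\mathbf S_2$.
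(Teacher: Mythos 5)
Your proposal is correct and coincides with the paper's own proof: it uses the identical witness family $n=2$, $t_0(x,y)=0$, $t_1(x,y)=xy$, $t_2(x,y)=x$ for Proposition~\ref{prop1}, with the same verification (idempotency $xx\approx x$ entering only at the odd-index step $t_1(x,x)\approx t_2(x,x)$) and the same appeal to the preceding Corollary for strong direct decomposability.
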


\begin{proof}
If
\begin{align*}
       n & :=2, \\
t_0(x,y) & :=0, \\
t_1(x,y) & :=xy, \\
t_2(x,y) & :=x
\end{align*}
then
\begin{align*}
t_0(x,y) & \approx0, \\
t_0(0,y) & \approx0, \\
t_1(0,y) & \approx0y\approx0, \\
t_2(0,y) & \approx0, \\
t_0(x,0) & \approx0\approx x0\approx t_1(x,0), \\
t_1(x,x) & \approx xx\approx x\approx t_2(x,x), \\
t_2(x,y) & \approx x
\end{align*}
and hence, by Proposition~\ref{prop1}, we obtain the result.
\end{proof}

The following characterization of directly decomposable congruence kernels is similar to the characterization presented in Theorem~\ref{th2}.

\begin{theorem}\label{th3}
For commutative semirings $\mathbf S_1=(S_1,+,\cdot,0)$ and $\mathbf S_2=(S_2,+,\cdot,0)$ and $\Theta\in\Con(\mathbf S_1\times\mathbf S_2)$ the kernel $[(0,0)]\Theta$ is directly decomposable if and only if
\begin{equation}\label{equ2}
(a,b),(c,d)\in[(0,0)]\Theta\text{ implies }(a,d)\in[(0,0)]\Theta.
\end{equation}
\end{theorem}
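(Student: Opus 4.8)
The plan is to prove the equivalence by directly comparing the two sets $[(0,0)]\Theta$ and $\pi_1([(0,0)]\Theta)\times\pi_2([(0,0)]\Theta)$, exactly as in the definition of direct decomposability. Since we already noted that $[(0,0)]\Theta\subseteq\pi_1([(0,0)]\Theta)\times\pi_2([(0,0)]\Theta)$ always holds, direct decomposability is equivalent to the reverse inclusion $\pi_1([(0,0)]\Theta)\times\pi_2([(0,0)]\Theta)\subseteq[(0,0)]\Theta$. So the whole argument reduces to showing that this reverse inclusion is equivalent to condition \eqref{equ2}.

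First I would prove the forward direction: assume $[(0,0)]\Theta$ is directly decomposable and verify \eqref{equ2}. Suppose $(a,b),(c,d)\in[(0,0)]\Theta$. Then $a\in\pi_1([(0,0)]\Theta)$ (witnessed by $(a,b)$) and $d\in\pi_2([(0,0)]\Theta)$ (witnessed by $(c,d)$), so $(a,d)\in\pi_1([(0,0)]\Theta)\times\pi_2([(0,0)]\Theta)=[(0,0)]\Theta$, which is exactly \eqref{equ2}.

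Next I would prove the converse: assume \eqref{equ2} and establish the reverse inclusion. Take any $(a,d)\in\pi_1([(0,0)]\Theta)\times\pi_2([(0,0)]\Theta)$. By definition of the projections, $a\in\pi_1([(0,0)]\Theta)$ means there is some $b$ with $(a,b)\in[(0,0)]\Theta$, and $d\in\pi_2([(0,0)]\Theta)$ means there is some $c$ with $(c,d)\in[(0,0)]\Theta$. Then \eqref{equ2} applied to these two elements yields $(a,d)\in[(0,0)]\Theta$, giving the desired inclusion.

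This is a clean set-theoretic argument and I do not anticipate a genuine obstacle; the only point requiring minor care is the bookkeeping of which coordinate supplies which witness in the converse direction—it is crucial that $a$ is drawn from the first projection and $d$ from the second, so that the two chosen kernel elements $(a,b)$ and $(c,d)$ are precisely the ones to which \eqref{equ2} applies. Note also that, unlike Theorem~\ref{th2}, this characterization involves only the kernel $[(0,0)]\Theta$ itself rather than the induced factor congruences $\Theta_1,\Theta_2$, which is why the condition is stated purely in terms of membership in $[(0,0)]\Theta$.
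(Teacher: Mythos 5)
Your proposal is correct and follows essentially the same argument as the paper's own proof: both directions proceed by the identical witness-chasing between $[(0,0)]\Theta$ and $\pi_1([(0,0)]\Theta)\times\pi_2([(0,0)]\Theta)$, with the trivial inclusion $[(0,0)]\Theta\subseteq\pi_1([(0,0)]\Theta)\times\pi_2([(0,0)]\Theta)$ handled just as in the paper.
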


\begin{proof}
If the kernel $[(0,0)]\Theta$ is directly decomposable and $(a,b),(c,d)\in[(0,0)]\Theta$ then
\[
a\in\pi_1([(0,0)]\Theta)\text{ and }d\in\pi_2([(0,0)]\Theta)
\]
whence
\[
(a,d)\in\pi_1([(0,0)]\Theta)\times\pi_2([(0,0)]\Theta)=[(0,0)]\Theta
\]
proving (\ref{equ2}). Conversely, assume (\ref{equ2}) to be satisfied and let $(a,d)\in\pi_1([(0,0)]\Theta)\times\pi_2([(0,0)]\Theta)$. Then there exist $b\in S_2$ and $c\in S_1$ with $(a,b),(c,d)\in[(0,0)]\Theta$. Using (\ref{equ2}) we conclude $(a,d)\in[(0,0)]\Theta$ proving
\[
\pi_1([(0,0)]\Theta)\times\pi_2([(0,0)]\Theta)\subseteq[(0,0)]\Theta.
\]
The converse inclusion is trivial.
\end{proof}

It is evident also from the conditions of Theorems~\ref{th2} and \ref{th3} that if a direct product of semirings has strongly directly decomposable congruence kernels then it has directly decomposable congruence kernels.

We say that a class $\mathcal C$ of algebras of the same type containing a constant $0$ has {\em directly decomposable congruence kernels} if for any $\mathbf A_1,\mathbf A_2\in\mathcal C$ and each $\Theta\in\Con(\mathbf A_1\times\mathbf A_2)$, $[(0,0)]\Theta$ is directly decomposable.

The following Mal'cev type condition was derived in \cite{CEL12}:

\begin{proposition}\label{prop2}
{\rm(}Theorem 11.0.4 in {\rm\cite{CEL12})} A variety of algebras with $0$ has directly decomposable congruence kernels if there exist positive integers $m$ and $n$, binary terms $s_1,\ldots,s_m,t_1,\ldots$ $\ldots,t_m$ and $(m+2)$-ary terms $u_1,\ldots,u_n$ satisfying the identities
\begin{align*}
u_1(x,y,s_1(x,y),\ldots,s_m(x,y)) & \approx x, \\
u_1(y,x,t_1(x,y),\ldots,t_m(x,y)) & \approx x, \\
u_i(y,x,s_1(x,y),\ldots,s_m(x,y)) & \approx u_{i+1}(x,y,s_1(x,y),\ldots,s_m(x,y))\text{ for }i=1,\ldots,n-1, \\
u_i(x,y,t_1(x,y),\ldots,t_m(x,y)) & \approx u_{i+1}(y,x,t_1(x,y),\ldots,t_m(x,y))\text{ for }i=1,\ldots,n-1, \\
u_n(y,x,s_1(x,y),\ldots,s_m(x,y)) & \approx x, \\
u_n(x,y,t_1(x,y),\ldots,t_m(x,y)) & \approx y.
\end{align*}
\end{proposition}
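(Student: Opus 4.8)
The plan is to reduce to the elementwise criterion of Theorem~\ref{th3} and then to feed the hypothesis into the given terms so as to build a chain of $\Theta$-related elements. First I would observe that the criterion of Theorem~\ref{th3}, that $[(0,0)]\Theta$ is directly decomposable exactly when $(a,b),(c,d)\in[(0,0)]\Theta$ implies $(a,d)\in[(0,0)]\Theta$, has a proof that uses nothing but the two projections; it therefore holds verbatim for an arbitrary algebra with a constant $0$, not only for commutative semirings. Hence it suffices to fix $\mathbf A_1,\mathbf A_2$ in the variety and $\Theta\in\Con(\mathbf A_1\times\mathbf A_2)$, to take $(a_1,a_2),(b_1,b_2)\in[(0,0)]\Theta$, and to prove $(a_1,b_2)\in[(0,0)]\Theta$. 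Writing $f:=(a_1,a_2)$, $g:=(b_1,b_2)$ and $o:=(0,0)$, the hypothesis gives $f\mathrel\Theta o\mathrel\Theta g$, so $f,g,o$ are pairwise $\Theta$-related; interchanging or replacing these three elements inside a term is the only kind of congruence move I may use. As a sanity check one sees immediately that $(a_1,b_2)\mathrel{\Pi_1}f\mathrel\Theta o$ and $(a_1,b_2)\mathrel{\Pi_2}g\mathrel\Theta o$, so $(a_1,b_2)$ lies in $[(0,0)]\bigl((\Theta\vee\Pi_1)\cap(\Theta\vee\Pi_2)\bigr)$; the whole point of the terms is to upgrade these two $(\Theta\vee\Pi_i)$-relations to a genuine $\Theta$-relation.

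The mechanism I would use is that term operations act coordinatewise on the product. Substituting the data into the identities of the Proposition, the first coordinate is controlled by the three ``$s$-identities'' $u_1(x,y,\dots)\approx x$, $u_i(y,x,\dots)\approx u_{i+1}(x,y,\dots)$, $u_n(y,x,\dots)\approx x$ (with the arguments $s_1(x,y),\dots,s_m(x,y)$), while the second coordinate is controlled by the three ``$t$-identities'' $u_1(y,x,\dots)\approx x$, $u_i(x,y,\dots)\approx u_{i+1}(y,x,\dots)$, $u_n(x,y,\dots)\approx y$ (with the arguments $t_1(x,y),\dots,t_m(x,y)$). The first group makes the first coordinate telescope and return to $a_1$, the second group makes the second coordinate telescope and pass from $a_2$ to $b_2$, and the boundary identities $u_1(\cdots)\approx x$ and $u_n(\cdots)\approx y$ are exactly what is needed to produce $(a_1,a_2)$ at one end of the chain and $(a_1,b_2)$ at the other. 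The interior links of the chain are the equalities $u_i(\cdots)\approx u_{i+1}(\cdots)$, and the $\Theta$-steps come from replacing $f,g,o$ by one another in the first two argument slots. Transitivity would then give $(a_1,a_2)\mathrel\Theta(a_1,b_2)$, and combining with $f\mathrel\Theta o$ yields $(a_1,b_2)\in[(0,0)]\Theta$.

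The hard part is to make the two coordinate computations run inside one and the same chain. Since a term is applied uniformly, the index $i$ of $u_i$ is common to both coordinates, yet the telescoping identity governing the first coordinate advances $i$ while flipping the order of the first two arguments in the sense opposite to the telescoping identity governing the second coordinate. The naive choice of arguments therefore drives the two coordinates out of phase, and the pair one is tempted to interchange turns out to be a mixed pair such as $\bigl((a_1,b_2),(b_1,a_2)\bigr)$, which is not known to be $\Theta$-related, so that the corresponding step would be illegitimate. The delicate core of the proof is thus the bookkeeping: at each of the $n$ stages one must choose precisely which of $f,g,o$ sits in the first two slots and which tuple assembled from $s_1(a_1,b_1),\dots,t_m(a_2,b_2)$ sits in the remaining slots, in such a way that every congruence move is an honest replacement of $\Theta$-related elements while both coordinate chains still telescope in step. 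Once this alignment is arranged, checking that the endpoints are $(a_1,a_2)$ and $(a_1,b_2)$ and that consecutive elements are $\Theta$-related is a routine, though lengthy, verification against the six families of identities.
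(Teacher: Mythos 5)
A preliminary remark on the comparison you ask for: the paper contains no proof of this proposition at all --- it is quoted as Theorem~11.0.4 of \cite{CEL12} --- so your attempt can only be measured against the standard proof of that theorem. Your opening reduction is sound: the proof of Theorem~\ref{th3} uses nothing but the two projections, so its criterion holds verbatim for arbitrary algebras with $0$, and it does suffice to show that $(a_1,a_2),(b_1,b_2)\in[(0,0)]\Theta$ implies $(a_1,b_2)\in[(0,0)]\Theta$. Your identification of the zigzag mechanism (coordinatewise action of terms, telescoping equalities interleaved with congruence replacements) is also the right one.

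However, the proposal stops exactly where the proof lives: you correctly diagnose that the naive instantiation drives the two coordinates out of phase, but you then merely assert that some stage-by-stage assignment of $f,g,o$ and of parameter tuples makes everything align, without exhibiting it --- and the inventory you offer, tuples built from $s_1(a_1,b_1),\ldots,t_m(a_2,b_2)$, is precisely the unswapped choice that cannot work: with $t_k(a_2,b_2)$ the fourth identity forces the mixed elements $(b_1,a_2)$ and $(a_1,b_2)$ into the first two slots, which is exactly the illegitimate move you flagged, and no juggling of $f,g,o$ repairs this. The missing idea is a single uniform substitution, not bookkeeping: instantiate $(x,y):=(a_1,b_1)$ in the first coordinate but $(x,y):=(b_2,a_2)$, \emph{swapped}, in the second, i.e.\ put $w_k:=\bigl(s_k(a_1,b_1),t_k(b_2,a_2)\bigr)$ for $k=1,\ldots,m$, and keep $f:=(a_1,a_2)$ and $g:=(b_1,b_2)$ in the first two slots throughout. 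Then the first and second identities give $u_1(f,g,w_1,\ldots,w_m)=(a_1,b_2)$, the third and fourth give the exact equality $u_i(g,f,w_1,\ldots,w_m)=u_{i+1}(f,g,w_1,\ldots,w_m)$ for $i=1,\ldots,n-1$ (the opposite senses of the two telescoping families cancel against the swap --- this is why the Mal'cev condition is stated asymmetrically), and the fifth and sixth give $u_n(g,f,w_1,\ldots,w_m)=(a_1,a_2)$. Since $f\mathrel\Theta(0,0)\mathrel\Theta g$, each step $u_i(f,g,w_1,\ldots,w_m)\mathrel\Theta u_i(g,f,w_1,\ldots,w_m)$ is a legitimate replacement of componentwise $\Theta$-related arguments, whence
\[
(a_1,b_2)=u_1(f,g,w_1,\ldots,w_m)\mathrel\Theta u_1(g,f,w_1,\ldots,w_m)=u_2(f,g,w_1,\ldots,w_m)\mathrel\Theta\cdots=u_n(f,g,w_1,\ldots,w_m)\mathrel\Theta u_n(g,f,w_1,\ldots,w_m)=(a_1,a_2)\mathrel\Theta(0,0),
\]
and note that $o$ is never needed inside the chain, only in the final step. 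So the gap is genuine: the ``alignment'' you defer as routine is the entire content of the argument, and as described (with the unswapped $t$-tuple and stage-dependent choices) it would not close.
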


\begin{corollary}
The variety of unitary commutative semirings has directly decomposable congruence kernels.
\end{corollary}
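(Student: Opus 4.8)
The plan is to apply the characterization of directly decomposable kernels from Theorem~\ref{th3} rather than to grind out explicit Mal'cev terms for Proposition~\ref{prop2}: in the unitary case the unit elements already give the separation we need, so I would simply verify condition~(\ref{equ2}) directly. Fix unitary commutative semirings $\mathbf S_1=(S_1,+,\cdot,0)$ and $\mathbf S_2=(S_2,+,\cdot,0)$ with units $1\in S_1$ and $1\in S_2$, take $\Theta\in\Con(\mathbf S_1\times\mathbf S_2)$, and suppose $(a,b),(c,d)\in[(0,0)]\Theta$. The goal is to produce $(a,d)\in[(0,0)]\Theta$, after which Theorem~\ref{th3} delivers direct decomposability.

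The decisive step is to multiply the two given kernel elements by the ``partial units'' $(1,0)$ and $(0,1)$ of the product, both of which are available precisely because each factor is unitary. Since $\Theta$ is compatible with multiplication and $(a,b)\T(0,0)$, we obtain $(a,b)(1,0)\T(0,0)(1,0)$; invoking $x1\approx x$ (from $1x\approx x$ together with commutativity of $\cdot$) and $x0\approx0$ (from $0x\approx0$ together with commutativity) this reads $(a,0)\T(0,0)$. Symmetrically, from $(c,d)\T(0,0)$ and multiplication by $(0,1)$ we get $(0,d)\T(0,0)$. Finally, compatibility of $\Theta$ with $+$ yields
\[
(a,d)=(a,0)+(0,d)\T(0,0)+(0,0)=(0,0),
\]
so $(a,d)\in[(0,0)]\Theta$, which is exactly implication~(\ref{equ2}); Theorem~\ref{th3} then closes the argument.

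I expect the only point needing care, rather than a true obstacle, to be the bookkeeping that collapses the two products to the desired components: one must use commutativity to pass from the stated laws $1x\approx x$ and $0x\approx0$ to their right-hand forms $x1\approx x$ and $x0\approx0$, and one must observe that the two partial units exist only because \emph{both} factors are unitary, which holds throughout the variety. A more mechanical alternative would be to instantiate the scheme of Proposition~\ref{prop2}; but any such terms would merely re-encode the two multiplications above, and since the earlier corollary already shows that products of unitary semirings have directly decomposable \emph{ideals} (and every kernel is an ideal), the direct verification through Theorem~\ref{th3} is the cleanest route.
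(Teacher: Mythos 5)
Your proof is correct, but it takes a genuinely different route from the paper's. The paper does what you deliberately avoided: it instantiates the Mal'cev-type scheme of Proposition~\ref{prop2} with $m=3$, $n=2$, $s_1(x,y):=1$, $s_2(x,y):=s_3(x,y):=0$, $t_1(x,y):=0$, $t_2(x,y):=1$, $t_3(x,y):=y$, $u_1(x,y,z,u,v):=xz+yu$, $u_2(x,y,z,u,v):=yz+v$, and checks the six required identities. You instead verify condition~(\ref{equ2}) of Theorem~\ref{th3} directly, and every step is sound: compatibility of $\Theta$ with multiplication by the constant pairs $(1,0)$ and $(0,1)$ turns $(a,b)\T(0,0)$ into $(a,0)\T(0,0)$ and $(c,d)\T(0,0)$ into $(0,d)\T(0,0)$ (your care in deriving $x1\approx x$ and $x0\approx 0$ from the stated laws via commutativity is exactly what is needed), and compatibility with $+$ then gives $(a,d)=(a,0)+(0,d)\T(0,0)$. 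In substance your multiplications are precisely what the paper's terms encode --- $u_1(x,y,s_1(x,y),s_2(x,y),s_3(x,y))=x\cdot1+y\cdot0$ is your partial-unit trick in term form --- so nothing is lost semantically; what the paper's version buys is an explicit syntactic witness at the level of the equational theory of the variety, plugging into the general machinery of \cite{CEL12}, whereas your version is shorter and self-contained within the paper's Section~3. Your fallback remark is also valid: a kernel is an ideal of the product, and the corollary to Theorem~\ref{th1}, case (i), decomposes every ideal of a product of unitary commutative semirings. One bonus you left on the table: applying the same two multiplications to the hypotheses of Theorem~\ref{th2} --- from $(a,b)\T(0,c)$ one gets $(a,0)=(a,b)(1,0)\T(0,c)(1,0)=(0,0)$, and from $(d,e)\T(f,0)$ one gets $(0,e)=(d,e)(0,1)\T(f,0)(0,1)=(0,0)$, whence $(a,e)\T(0,0)$ --- so your method in fact shows that these kernels are even \emph{strongly} directly decomposable, which is more than the corollary claims and more than the paper's Mal'cev-term argument delivers.
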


\begin{proof}
If
\begin{align*}
             m & :=3, \\
             n & :=2, \\
      s_1(x,y) & :=1, \\
      s_2(x,y) & :=0, \\
      s_3(x,y) & :=0, \\
      t_1(x,y) & :=0, \\
      t_2(x,y) & :=1, \\
      t_3(x,y) & :=y, \\
u_1(x,y,z,u,v) & :=xz+yu, \\
u_2(x,y,z,u,v) & :=yz+v
\end{align*}
then
\begin{align*}
u_1(x,y,1,0,0) & \approx x1+y0\approx x, \\
u_1(y,x,0,1,y) & \approx y0+x1\approx x, \\
u_1(y,x,1,0,0) & \approx y1+x0\approx y\approx y1+0\approx u_2(x,y,1,0,0), \\
u_1(x,y,0,1,y) & \approx x0+y1\approx y\approx x0+y\approx u_2(y,x,0,1,y), \\
u_2(y,x,1,0,0) & \approx x1+0\approx x, \\
u_2(x,y,0,1,y) & \approx y0+y\approx y
\end{align*}
and hence, by Proposition~\ref{prop2}, we obtain the result.
\end{proof}

Authors' addresses:

Ivan Chajda \\
Palack\'y University Olomouc \\
Faculty of Science \\
Department of Algebra and Geometry \\
17.\ listopadu 12 \\
771 46 Olomouc \\
Czech Republic \\
ivan.chajda@upol.cz

G\"unther Eigenthaler \\
TU Wien \\
Faculty of Mathematics and Geoinformation \\
Institute of Discrete Mathematics and Geometry \\
Wiedner Hauptstra\ss e 8-10 \\
1040 Vienna \\
Austria \\
guenther.eigenthaler@tuwien.ac.at

Helmut L\"anger \\
TU Wien \\
Faculty of Mathematics and Geoinformation \\
Institute of Discrete Mathematics and Geometry \\
Wiedner Hauptstra\ss e 8-10 \\
1040 Vienna \\
Austria, and \\
Palack\'y University Olomouc \\
Faculty of Science \\
Department of Algebra and Geometry \\
17.\ listopadu 12 \\
771 46 Olomouc \\
Czech Republic \\
helmut.laenger@tuwien.ac.at
\end{document}